\newtheorem{theorem}{Theorem}[section]
\newtheorem{lemma}{Lemma}[section]
\newtheorem{claim}{Claim}[section]
\newtheorem{remarks}{Remarks}[section]
\begin{document}
\textwidth 150mm \textheight 225mm
\title{A note on generalized crowns in linear $r$-graphs \thanks{Supported by the National Natural Science Foundation of
China (No. 12271439) and China Scholarship Council (No. 202206290003).}}
\author{{Lin-Peng Zhang\textsuperscript{a,b}, Hajo Broersma\textsuperscript{b}\footnote{Corresponding author.}, Ligong Wang\textsuperscript{a}}\\
{\small \textsuperscript{a} School of Mathematics and Statistics,}\\
{\small Northwestern Polytechnical University, Xi'an, Shaanxi 710129, P.R. China.}\\
{\small \textsuperscript{b} Faculty of Electrical Engineering, Mathematics and Computer Science,}\\
{\small  University of Twente, P.O. Box 217, 7500 AE Enschede, the Netherlands.}\\
{\small E-mail: lpzhangmath@163.com, h.j.broersma@utwente.nl, lgwangmath@163.com}}
\date{}
\maketitle
\begin{center}
\begin{minipage}{135mm}
\vskip 0.3cm
\begin{center}
{\small {\bf Abstract}}
\end{center}
{\small An $r$-graph $H$ 
is a hypergraph consisting of a nonempty set of vertices $V$ and a collection of $r$-element subsets of $V$ we refer to as the edges of $H$. 
An $r$-graph $H$ is called linear if any two edges of $H$ intersect in at most one vertex. 
Let $F$ and $H$ be two linear $r$-graphs. If $H$ contains no copy of $F$, then $H$ is called
 $F$-free. The linear Tur\'{a}n number of $F$, denoted by $ex_r^{lin}(n,F)$, is the maximum number of edges in any $F$-free $n$-vertex linear $r$-graph.  
 The crown $C_{13}$ (or $E_4$) is a linear 3-graph which is obtained from three pairwise disjoint edges by adding one edge that intersects all three of them in one vertex. 
In 2022, Gy\'{a}rf\'{a}s, Ruszink\'{o} and S\'{a}rk\"{o}zy initiated the study of $ex_3^{lin}(n,F)$ for different choices of an acyclic 3-graph $F$. They determined the linear Tur\'{a}n numbers for all linear 3-graphs with at most 4 edges, except the crown. They  
established lower and upper bounds for $ex_3^{lin}(n,C_{13})$. In fact, their lower bound on $ex_3^{lin}(n,C_{13})$ is essentially tight, as was shown in a recent paper by  Tang, Wu, Zhang and Zheng. 
 In this paper, we generalize the notion of a crown to linear $r$-graphs  
 for $r\ge 3$, and also generalize the above results to linear $r$-graphs.
 \vskip 0.1in \noindent {\bf Key Words}: \ linear Tur\'{a}n number; generalized crown; linear $r$-graph \vskip
0.1in \noindent {\bf AMS Subject Classification (2020)}: \ 05C35, 05C65}
\end{minipage}
\end{center}

\section{Introduction}
The result presented here is motivated by a number of very recent papers on linear Tur\'{a}n numbers. We extend a result on crown-free linear 3-graphs to linear $r$-graphs 
for $r\ge 3$.  Throughout, we let $r$ be an integer with $r\ge 3$. 

Let $H=(V,E)$ be an $r$-graph consisting of a set of vertices $V=V(H)$ and a collection $E=E(H)$ of $r$-element subsets of $V$ called edges. 
If any two edges in $H$ intersect in at most one vertex, then $H$ is said to be linear. 
Let $F$ be a linear $r$-graph. 
Then $H$ is called $F$-free if it contains no copy of $F$ as its subhypergraph. The linear Tur\'{a}n number of $F$, denoted by $ex_r^{lin}(n,F)$, is the maximum number of edges in any $F$-free linear $r$-graph on $n$ vertices. 
More generally, for two linear $r$-graphs $F_1$ and $F_2$, $H$ is 
called $\{F_1,F_2\}$-free if it contains no copy of $F_1$ or $F_2$ as its subhypergraph. The linear Tur\'{a}n number of $\{F_1,F_2\}$, denoted by $ex_r^{lin}(n,\{F_1,F_2\})$, is the maximum number of edges in any $\{F_1,F_2\}$-free linear $r$-graph on $n$ vertices.

A linear 3-graph is acyclic if it can be constructed in the following way. 
We start with one edge. Then at each step we add a new edge  
intersecting the union of the vertices of the previous edges in at most one vertex. In 2022, 
Gy\'{a}rf\'{a}s, Ruszink\'{o} and S\'{a}rk\"{o}zy~\cite{GRS22} initiated the study of $ex_3^{lin}(n,F)$ for 
different choices of an acyclic 3-graph $F$. In~\cite{GRS22},  
they determined the linear Tur\'{a}n numbers of linear 3-graphs with at most 4 edges, except the crown, 
for which they gave lower and upper bounds (Theorem~\ref{ch1:GRS} below). 
Here the crown is a linear 3-graph which is obtained from three pairwise disjoint edges 
on 3 vertices by adding one edge that intersects all three of them 
in one vertex. 
In~\cite{GRS22}, the authors used $E_4$ to denote a crown, but here we adopt the notation $C_{13}$ from the more recent paper~\cite{TWZZ22}.  

Since the publication of~\cite{GRS22}, 
there have appeared several results involving the linear Tur\'{a}n number of some acyclic linear hypergraphs~\cite{GS21,GS22,S23}. 
In the remainder, we focus on results involving $C_{13}$, as our aim is to present a natural generalization of these results to linear $r$-graphs. 

In~\cite{GRS22}, Gy\'{a}rf\'{a}s, Ruszink\'{o} and S\'{a}rk\"{o}zy obtained the following result.

\begin{theorem}[\cite{GRS22}]\label{ch1:GRS} 
$$
6\left\lfloor \frac{n-3}{4}\right\rfloor+\varepsilon\le ex^{lin}_3(n,C_{13})\le 2n,
$$
where $\varepsilon=0$ if $n-3\equiv 0,1 \pmod{4}$, $\varepsilon=1$ if $n-3\equiv 2 \pmod{4}$, and $\varepsilon=3$ if $n-3\equiv 3 \pmod{4}$.
\end{theorem}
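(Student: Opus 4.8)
The two inequalities are essentially unrelated, so the plan is to handle them separately.

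\emph{Lower bound (construction).} The shape $6\lfloor(n-3)/4\rfloor+\varepsilon$ suggests fixing three ``base'' vertices $u_1,u_2,u_3$ and attaching disjoint groups of $4$ new vertices, each carrying $6$ edges. I would first isolate a general principle. Call a linear $3$-graph $B$ on $\{u_1,u_2,u_3\}\cup S$ (with $S$ disjoint from the base) a \emph{block} if every edge of $B$ contains exactly one of $u_1,u_2,u_3$ and every two edges of $B$ intersect. If $H$ is obtained by gluing several blocks along the common triple $\{u_1,u_2,u_3\}$ (possibly plus some isolated vertices), then $H$ is linear, and I claim $H$ is $C_{13}$-free: in any copy of $C_{13}$ the three pairwise disjoint edges must lie in three distinct blocks (two edges of one block meet), hence they pass through $u_1,u_2,u_3$ respectively; but then the transversal edge, lying in one block and containing only one base vertex, can meet at most one of the three disjoint edges, a contradiction. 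It remains to exhibit blocks and count. On $4$ new vertices $\{a,b,c,d\}$ the three ways to split into two pairs give the $6$-edge block $\{u_1ab,u_1cd,u_2ac,u_2bd,u_3ad,u_3bc\}$ (one checks any two of these six edges share exactly one vertex), and on $3$ new vertices $\{v_1,v_2,v_3\}$ the loose triangle $\{u_1v_1v_2,u_2v_2v_3,u_3v_3v_1\}$ is a $3$-edge block, as is any single pendant edge $\{v_1v_2u_1\}$. Taking $k=\lfloor(n-3)/4\rfloor$ copies of the $6$-edge block uses $4k+3$ vertices and gives $6k$ edges; the leftover $n-3-4k\in\{0,1,2,3\}$ vertices are absorbed by, respectively, nothing; one isolated vertex; a pendant-edge block; or one loose-triangle block. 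This yields $6k+\varepsilon$ edges with $\varepsilon$ exactly as stated, and $C_{13}$-freeness is immediate from the block principle.

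\emph{Upper bound.} The core is the local statement: \textbf{if $e=\{x,y,z\}$ is an edge of a linear $3$-graph with $d(x)\ge 2$, $d(y)\ge 4$ and $d(z)\ge 6$, then the graph contains $C_{13}$}, with $e$ as the transversal. Greedily pick an edge $f_x\ni x$ with $f_x\ne e$; then $f_y\ni y$ with $f_y\ne e$ and $f_y\cap f_x=\emptyset$; then $f_z\ni z$ with $f_z\ne e$ and $f_z\cap(f_x\cup f_y)=\emptyset$. By linearity the number of edges through $y$ forbidden at the second step is at most $3$ (the edge through $\{x,y\}$ is $e$ itself, plus one edge for each of the two other vertices of $f_x$), and at most $5$ edges through $z$ are forbidden at the third step; so the degree hypotheses make all three choices possible, and $\{f_x,f_y,f_z\}$ together with $e$ form a crown. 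Contrapositively, in a $C_{13}$-free linear $3$-graph every edge with a vertex of degree $\ge 6$ has its other two vertices both of degree $\le 3$, or else one of them of degree $\le 1$; in particular an edge with two vertices of degree $\ge 7$ has a third vertex of degree $\le 1$.

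Now run a discharging argument: give each vertex $v$ the charge $d(v)-6$, so the total charge is $3m-6n$. Every vertex $v$ with $d(v)\ge 7$ sends $\frac{d(v)-6}{d(v)}$ along each of its $d(v)$ edges to a chosen endpoint of that edge of degree $\le 3$ (one exists by the local statement), ending with charge $0$; vertices of degree in $\{4,5,6\}$ keep their non-positive charge and send nothing. A vertex $v$ of degree $\le 3$ is the only kind that receives: if $d(v)\ge 2$ then by the ``two vertices of degree $\ge 7$'' consequence each edge at $v$ has at most one endpoint of degree $\ge 7$, so $v$ gets from at most $d(v)\le 3$ senders and in total strictly less than $d(v)$, ending below $(d(v)-6)+d(v)\le 0$; if $d(v)\le 1$ the crude bound of $2d(v)$ senders gives a final charge below $(d(v)-6)+2d(v)\le -3$. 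Hence all final charges are $\le 0$, so $3m-6n\le 0$, i.e.\ $m\le 2n$.

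\emph{Main obstacle.} The only delicate point is making the discharging balance: a degree-$3$ vertex lying in three edges each having a second endpoint of degree $\ge 7$ would receive too much, and this is exactly what the corollary ``an edge with two vertices of degree $\ge 7$ has a third vertex of degree $\le 1$'' rules out. On the construction side, once the block abstraction is set up the work is routine; the only care needed is verifying that the six edges of the pairing block pairwise meet in a single vertex and bookkeeping the four residue classes of $n-3$ modulo $4$.
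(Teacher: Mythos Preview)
Your lower-bound construction coincides with the one the paper recalls from \cite{GRS22}: your ``$6$-edge block'' on $\{a,b,c,d\}$ is exactly the one-factorization of $K_4$ into three perfect matchings, with one new vertex $u_i$ attached to the $i$-th matching, and gluing $m$ such blocks along $\{u_1,u_2,u_3\}$ is precisely the $mK_4$ construction described after Theorem~\ref{ch1:GRS}. Your block abstraction is a clean way to package the $C_{13}$-freeness argument and to handle the residue classes uniformly (the paper simply asserts that the construction ``can be adjusted'' for the other residues). One small slip: the transversal edge can in fact meet \emph{two} of the three disjoint leaves---one via its unique base vertex and one via a non-base vertex when it happens to lie in the same block as that leaf---not ``at most one''. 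Since two is still less than three, the conclusion is unaffected.

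For the upper bound $ex^{lin}_3(n,C_{13})\le 2n$ the present paper gives no argument at all; it merely quotes the result from \cite{GRS22}, so there is nothing here to compare your proof against. That said, your approach is sound. The local lemma (degrees $\ge 2,\,4,\,6$ on an edge force a crown) is correct: by linearity at most three $y$-edges and at most five $z$-edges are blocked in the greedy choice. The discharging then balances exactly as you describe; the key point---that a receiver of degree $2$ or $3$ sees at most one sender per incident edge---follows from your corollary that an edge with two vertices of degree $\ge 7$ forces the third vertex to have degree $\le 1$, and each sender transmits strictly less than $1$, so such a receiver ends with charge strictly below $2d(v)-6\le 0$.
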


Indeed, for the lower bound in Theorem~\ref{ch1:GRS}, the authors of~\cite{GRS22} gave the 
following construction for obtaining a class of extremal linear $C_{13}$-free 3-graphs.  
We recall this construction for later reference. 
Start with the graph $mK_4$ consisting of $m$ disjoint copies of the complete graph on four vertices. The graph $mK_4$ admits a one-factorization, $i.e.$, a decomposition of the edge set into three edge-disjoint perfect matchings. Each of these matchings corresponds to $2m$ vertex-disjoint pairs of edges.
 Add one new vertex for each of the matchings and form $2m$ triples by adding this vertex to each of the $2m$ pairs. 
Now ignore the edges of the $mK_4$. 
This construction consists of $n=4m+3$ vertices and $6m$ triples, and it is easy to check that the corresponding $3$-graph 
is linear and $C_{13}$-free. Thus for $n=4m+3$, this construction provides an extremal $3$-graph with 
$6\left\lfloor \frac{n-3}{4}\right\rfloor+\varepsilon$ edges, where $\varepsilon$ is defined as in the above theorem. 
The construction can be adjusted to obtain extremal $3$-graphs for the other residue classes modulo 4. 

In a later paper~\cite{CFGGWY22}, Carbonero, Fletcher, Guo, Gy\'arf\'as, Wang, and Yan 
proved that every linear 3-graph with minimum degree 4 contains a crown.  
The same group of authors conjectured 
in~\cite{CFGGWY21} that $ex^{lin}_3(n,C_{13})\sim \frac{3n}{2}$, and proposed some ideas to obtain the exact bounds.
After that, Fletcher~\cite{F21} improved the upper bound 
to $ex^{lin}_3(n,C_{13})\le \frac{5n}{3}$.

Very recently, Tang, Wu, Zhang and Zheng~\cite{TWZZ22} established the following result.

\begin{theorem}[\cite{TWZZ22}]\label{ch1:TWZZ}
Let $G$ be any  $C_{13}$-free linear 3-graph on $n$ vertices. Then 
$|E(G)|\le \frac{3(n-s)}{2}$,
where $s$ denotes the number of vertices in $G$ with degree at least 6.
\end{theorem}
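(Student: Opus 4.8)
The plan is to establish the equivalent inequality $2|E(G)|\le 3|S|$, where $S$ is the set of vertices of degree at most $5$ (so $|S|=n-s$), by a discharging argument. Give every vertex $v$ the initial charge $d(v)$; the total charge is then $\sum_v d(v)=3|E(G)|$. We will redistribute the charges so that, in the end, every vertex of $S$ carries charge at most $\tfrac92$ and every vertex of degree at least $6$ carries charge at most $0$. This gives $3|E(G)|\le\tfrac92|S|$, i.e. $|E(G)|\le\tfrac{3(n-s)}{2}$.

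First I would read off the local structure forced by $C_{13}$-freeness near a vertex of high degree. The crucial point is a description of when an edge $e=\{u,v,w\}$ with $d(u)\ge 6$ can be the \emph{central} edge of a crown: because $u$ lies in at least $5$ edges other than $e$, and any four prescribed vertices can meet at most four of them, a crown with centre $e$ exists as soon as $v$ and $w$ have incident edges $e_v\ne e$, $e_w\ne e$ with $e_v\cap e_w=\emptyset$ (one then picks $e_u\ni u$, $e_u\ne e$, disjoint from $e_v\cup e_w$). Forbidding this in $G$ yields three facts: \textbf{(A)} if $d(u)\ge 6$ then $\min\{d(v),d(w)\}=1$ or $\max\{d(v),d(w)\}\le 3$ — indeed, if $d(v),d(w)\ge 2$ then every edge through $v$ other than $e$ must meet every edge through $w$ other than $e$ in a vertex outside $\{u,v,w\}$, and fixing one edge through $v$ shows that $w$ lies in at most two further edges; \textbf{(B)} no edge has all three endpoints of degree at least $6$; \textbf{(C)} if an edge has two endpoints of degree at least $6$, then its third endpoint has degree $1$.

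The discharging then proceeds as follows. Every vertex $u$ of degree at least $6$ sends charge $1$ along each of its $d(u)$ incident edges, so it finishes with charge $0$; by \textbf{(B)} at most $2$ units of charge leave along any edge, and no vertex of degree $\ge 6$ receives charge (charge is only ever handed to endpoints lying in $S$). The charge that travels along an edge $e$ is handed to endpoints of $e$ in $S$: if $e$ has an endpoint of degree $1$, all the charge $e$ has received is given to one such endpoint (this amount is $2$ exactly when $e$ has two endpoints of degree $\ge 6$ and a third of degree $1$, by \textbf{(C)}, and $1$ otherwise); by \textbf{(A)} the only remaining possibility is $e=\{u,x,y\}$ with $d(u)\ge 6$ and $d(x),d(y)\in\{2,3\}$, and then $\tfrac12$ goes to each of $x$ and $y$. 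Now check the final charge of a vertex $x\in S$: if $d(x)=3$ it receives at most $\tfrac12$ along each incident edge, hence at most $\tfrac32$ altogether, finishing with at most $3+\tfrac32=\tfrac92$; if $d(x)\in\{0,1,2\}$ it finishes with at most $0$, $3$, $3$ respectively; and by \textbf{(A)} (applied with $x$ in the role of $v$ or $w$) a vertex of degree $4$ or $5$ receives nothing and finishes with $4$ or $5$.

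The only vertices that now violate the target are those of degree $5$ in $S$, which finish with $5>\tfrac92$ and must each shed $\tfrac12$; this is the main obstacle. Whenever such a vertex $x$ lies on an edge meeting a vertex of degree $\ge 6$, that edge already contains a degree-$1$ vertex with large unused capacity, into which $x$ can dump $\tfrac12$; so the genuinely hard sub-case is when every edge through $x$ lies inside $S$ and joins $x$ only to vertices of degree $4$ or $5$. I expect the technical heart of the argument to be a structural lemma excluding this configuration (or bounding how many vertices of degree $5$ can occur), obtained by pushing the crown-centre analysis of the second step through an edge incident with the degree-$5$ vertex $x$ itself — where $x$ now has only four further incident edges, so the margins are tighter and a more delicate argument is needed — to locate for each such $x$ a neighbour of small degree. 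Granted this, a short accounting (vertices of $S$ of degree $0,1,2$ have spare capacity $\tfrac92,\tfrac32,\tfrac32$, those of degree $4$ have spare $\tfrac12$, and one relates the number of degree-$5$ vertices to the edges they live on) absorbs the total deficit of $\tfrac12$ per degree-$5$ vertex and completes the proof.
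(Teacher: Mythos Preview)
Your discharging framework and the structural facts \textbf{(A)}, \textbf{(B)}, \textbf{(C)} are correct, and the scheme does what you claim for vertices of degree $0,1,2,3,4$ and $\ge 6$. The genuine gap is exactly where you flag it: a degree-$5$ vertex $x$ all of whose ten neighbours along its five edges have degree $4$ or $5$. Your hope of ``pushing the crown-centre analysis through an edge incident with $x$'' does not go through directly, because with $d(x)=5$ there are only four $x$-edges other than $e$, and the four non-$x$ vertices of $e_v\cup e_w$ can block all four of them; so one cannot conclude, as in \textbf{(A)}, that every $v$-edge meets every $w$-edge. In other words, a single degree-$5$ vertex does not give enough leverage, and there is no short local accounting that absorbs the $\tfrac12$ deficit per degree-$5$ vertex without a further structural lemma.

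The paper (specialised to $r=3$, where $C_{1r}$ and $C^*$ coincide with $C_{13}$) takes a different route that sidesteps this obstacle. Instead of discharging, it uses the identity
\[
\sum_{e\in E(G)}\ \sum_{v\in e}\frac{I(v)}{d(v)}\;=\;n-s,\qquad I(v)=\mathbf 1_{\{d(v)\le 5\}},
\]
to find, in a minimal counterexample, an \emph{edge} $e=\{u_1,u_2,u_3\}$ with $\sum_i I(u_i)/d(u_i)<\tfrac23$; this forces $d(u_1)=d(u_2)=5$ and $d(u_3)\ge 4$. The key structural lemma then shows that the set $S$ of all vertices lying on an edge meeting $e$ has exactly $11$ vertices, all of degree $\le 5$, and that at most $13$ edges of $G$ touch $S$. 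Deleting $S$ removes $11$ vertices and at most $13$ edges while leaving $s$ unchanged, and since $13<\tfrac{3}{2}\cdot 11$ this produces a smaller counterexample, a contradiction.

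The point of comparison is that the paper looks for an edge with \emph{two} vertices of degree $5$, not a lone degree-$5$ vertex; having two such vertices on the same edge is precisely what makes the neighbourhood analysis go through. Your discharging can likely be completed, but the ``structural lemma'' you anticipate needing is essentially this same lemma about an edge with $D(e)\ge\{5,5,4\}$, so the missing piece is the heart of the paper's argument rather than a routine patch.
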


The above result shows that the lower bound in Theorem~\ref{ch1:GRS} is essentially tight. Furthermore, the above result, combined with the results in~\cite{GRS22}, 
essentially completes the determination of the linear Tur\'{a}n numbers for all linear 3-graphs with at most 4 edges. 

\section{Crown-free linear $r$-graphs} 

In the remainder, we focus on the following natural generalization of the notion of a crown to linear $r$-graphs. 
An $r$-crown $C_{1r}$ is a linear $r$-graph on $r^2$ vertices and $r+1$ edges 
obtained from $r$ pairwise disjoint edges 
on $r$ vertices by adding one edge that intersects all of them 
in one vertex. 
In fact, for our purposes we need a second generalization of the crown to linear $r$-graphs. We let $C^*$ denote the following 
 linear $r$-graph on 
$r^2-r+3$ vertices and $r+1$ edges. It consists of a set of $r-2$ edges $\{e_1,e_2,\ldots,e_{r-2}\}$ that intersect in exactly one vertex $v$, 
two additional disjoint edges $e_{r-1}$ and $e_r$ that 
are also 
disjoint from $\{e_1,e_2,\ldots,e_{r-2}\}$, and one additional edge $e$ intersecting each edge of $\{e_1,e_2,\ldots,e_r\}$ in exactly one
vertex except for $v$. 
Note that both $C_{1r}$ and $C^*$ are isomorphic to the crown in case $r=3$. 

In the following, we establish an upper bound 
on $ex^{lin}_{r}(n,\{C_{1r},C^*\})$, and a lower bound 
on $ex^{lin}_{r}(n,\{C_{1r},C^*\})$ when $r-1$ is a prime power.  
 
In order to obtain a lower bound on $ex_r^{lin}(n,\{C_{1r},C^*\})$, 
we can use a similar construction as 
in the description following 
Theorem~\ref{ch1:GRS}. 
We can construct  
a $\{C_{1r},C^*\}$-free 
linear $r$-graph on $n$ vertices  
by using 
the notion of a transversal design. 

Assume that $n$ is a multiple of $k$ for some integer $k\ge r-1$. 
A transversal design $T(n,k)$ is a linear $k$-graph on $n$ vertices,  
in which the vertices are partitioned into $k$ 
sets, each containing $\frac{n}{k}$ 
vertices, and where each pair of vertices from different 
sets belongs to exactly one edge
on $k$ vertices. 
Note that $T(n,k)$ is an 
$\frac{n}{k}$-regular 
$k$-partite linear $k$-graph. It can be found in~\cite{CD07} that 
such $T(n,k)$ exist for sufficiently large $n$ 
when $k$ divides $n$. In particular, 
$T(k^2,k)$ exists when $k$ is a prime power. 

Let $r-1$ be a prime power.
Denote by $T'((r-1)^2,r-1)$ the linear $(r-1)$-graph obtained from $T((r-1)^2,r-1)$ by adding one edge for each 
set in the partition. 
Note that for $r=3$, $T'((r-1)^2,r-1)$ is a $K_4$. We next extend $m$ disjoint copies of $T'((r-1)^2,r-1)$ to a 
$\{C_{1r},C^*\}$-free 
 linear $r$-graph in the same way as we did for $r=3$ starting with $mK_4$. 
Consider a one-factorization of the linear $(r-1)$-graph $mT'((r-1)^2,r-1)$. 
Each of the $r$ factors corresponds to $(r-1)m$ vertex-disjoint $(r-1)$-tuples.  
Add one new vertex for each of the factors and form $(r-1)m$ edges by adding this vertex to each of the $(r-1)m$ $(r-1)$-tuples.  
The resulting linear $r$-graph has $r(r-1)m$ edges 
and $(r-1)^2m+r$ vertices, and it is $\{C_{1r},C^*\}$-free. 
Let $n=(r-1)^2m+r$. Then the number of edges of 
the constructed $r$-graph is at 
least $r(r-1)\left\lfloor\frac{n-r}{(r-1)^2}\right\rfloor$, where $r-1$ is a prime power.
 
In order to obtain an upper bound on $ex_r^{lin}(n, \{C_{1r},C^*\})$, 
we generalize the result of Theorem~\ref{ch1:TWZZ} to linear 
$r$-graphs. 
We present our proof of the following theorem in the next section.  
In the final section, we complete the paper with a short discussion. 

\begin{theorem}\label{ch1:thm1}
Let $G$ be any $\{C_{1r},C^*\}$-free linear $r$-graph on $n$ vertices, 
and let $s$ denote the number of vertices with degree at least $(r-1)^2+2$.
Then 
$|E(G)|\le \frac{r(r-2)(n-s)}{r-1}$. 
\end{theorem}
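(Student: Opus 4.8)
The plan is to read off, from the absence of $C_{1r}$ and $C^{*}$, two local restrictions on the degrees of the $r$ vertices on a given edge, and then to combine these restrictions globally by a discharging/counting argument. Call a vertex \emph{heavy} if its degree is at least $(r-1)^{2}+2$, and \emph{light} otherwise; thus $S$ is exactly the set of heavy vertices, $|S|=s$, and every light vertex has degree at most $(r-1)^{2}+1$.

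The first restriction is a greedy construction of a crown. Let $e=\{v_{1},\dots,v_{r}\}$ be an edge with $d(v_{1})\le\cdots\le d(v_{r})$. If $d(v_{j})\ge (j-1)(r-1)+2$ for every $j$, then $G$ contains $C_{1r}$ with centre $e$: build the ``petal'' edges $g_{1}\ni v_{1},\dots,g_{r}\ni v_{r}$ one after another, each distinct from $e$ and pairwise disjoint. By linearity $g_{i}\cap e=\{v_{i}\}$, the only edge through $v_{j}$ meeting $\{v_{1},\dots,v_{j-1}\}$ is $e$ itself, and each of the at most $(j-1)(r-1)$ vertices of $g_{1},\dots,g_{j-1}$ lying off $e$ blocks at most one edge through $v_{j}$, so at least $d(v_{j})-1-(j-1)(r-1)\ge 1$ admissible choices for $g_{j}$ always remain. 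Taking $j=r$ shows that an edge all of whose vertices are heavy would produce a $C_{1r}$ — which is exactly why the heavy threshold is set at $(r-1)^{2}+2$ — so \emph{every edge has a light vertex}. More generally, an edge carrying $b$ heavy vertices has a vertex of degree at most $(r-1-b)(r-1)+1$: no edge has $r$ heavy vertices, an edge with $r-1$ of them has its last vertex of degree $1$, an edge with $r-2$ of them has a pendant among its two light vertices or both of them of degree at most $r$, and so on.

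The second restriction invokes the absence of $C^{*}$ together with a heavy vertex $u$: the $d(u)\ge(r-1)^{2}+2$ edges through $u$ pairwise meet only in $u$, so their traces off $u$ form a matching, and if some edge $e\not\ni u$ meets $r-2$ of them, then, using $u$ as the apex and those $r-2$ edges as $e_{1},\dots,e_{r-2}$, one greedily adjoins two further edges through the remaining two vertices of $e$ — pairwise disjoint and disjoint from $e_{1},\dots,e_{r-2}$ — to build a $C^{*}$, provided those two vertices have degree above a bound depending only on $r$. Hence an edge meeting several petals at a heavy vertex has most of its vertices of bounded degree; combined with the first restriction this controls the capped total $\sum_{v\in e}\min\{d(v),(r-1)^{2}+1\}$ along every edge incident with $S$. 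For $r=3$ both restrictions coincide with the single crown condition, and the scheme specialises to that of~\cite{TWZZ22}.

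The final step combines these into the global bound by a discharging argument, possibly supported by induction on the number of vertices. The purely edge-local constraints are not by themselves sufficient: handing each edge one unit of charge and splitting it equally among its light vertices only yields the wasteful $|E(G)|\le \frac{(r-1)^{2}+1}{r-1}(n-s)$, and to reach the exact constant $\frac{r(r-2)}{r-1}$ one must also control how high-degree light vertices and heavy vertices can accumulate near one another — moving surplus charge out of locally saturated regions onto the low-degree vertices that the two restrictions force to appear, while guaranteeing that no vertex is overloaded. I expect this balancing to be the main obstacle: each greedy lemma forbids only a single edge from being locally saturated, whereas the discharging has to rule out the more diffuse over-concentration represented by a moderately-high-degree light vertex surrounded by heavy ones. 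Getting a two-stage discharge (edge-to-vertex, then vertex-to-vertex) to close while extracting the full quantitative strength of $C^{*}$-freeness is where the real work lies, paralleling the delicate case analysis of Tang, Wu, Zhang and Zheng for $r=3$ and yielding Theorem~\ref{ch1:thm1}.
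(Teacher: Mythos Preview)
Your write-up is a plan, not a proof: the final discharging step is never carried out, and you yourself say that ``getting a two-stage discharge \dots\ to close \dots\ is where the real work lies.'' The two local restrictions you extract are correct and useful, but as you note, by themselves they only give $|E(G)|\le\frac{(r-1)^{2}+1}{r-1}(n-s)$, which is off by a factor of roughly $\frac{r-1}{r-2}$ from the target. You offer no concrete rule for the second-stage (vertex-to-vertex) transfer, no verification that receivers are never overloaded, and no argument that $C^{*}$-freeness suffices to make the books balance at the sharp constant $\frac{r(r-2)}{r-1}$. Until that is done there is no proof.

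The paper does not close the gap by discharging at all. It shares your starting identity
\[
\sum_{e\in E(G)}\sum_{v\in e}\frac{I(v)}{d(v)}=n-s,
\]
but then, assuming a minimal counterexample, uses it only to locate a single edge $e=\{u_{1},\dots,u_{r}\}$ with $\sum_{i}\frac{I(u_{i})}{d(u_{i})}<\frac{r-1}{r(r-2)}$, and from this inequality (together with your first greedy lemma, which forces $d(u_{1})\le (r-1)^{2}+1$) reads off that $d(u_{1})=d(u_{2})=(r-1)^{2}+1$ and $d(u_{i})\ge (r-1)^{2}$ for $i\ge 3$. The heart of the argument is then a \emph{structural lemma}: for such an edge, the set $S=\bigcup_{f\cap e\neq\emptyset}f$ has exactly $(r-1)^{3}+r$ vertices, all of degree at most $(r-1)^{2}+1$, and at most $r(r-1)^{2}+1$ edges of $G$ meet $S$. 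This is proved by a sequence of claims that completely pin down the local picture (e.g.\ the auxiliary bipartite graphs between the $u_{1}$-edges and the $u_{h}$-edges must decompose into copies of $K_{r-1,r-1}$), and it is precisely here that $C^{*}$-freeness is used --- not merely to bound degrees on edges near a heavy vertex, as in your second restriction, but to show that \emph{no} edge disjoint from $e$ can touch $S$ at all. Deleting $S$ then drops $n$ by $(r-1)^{3}+r$ and $|E|$ by at most $r(r-1)^{2}+1$ while leaving $s$ unchanged, contradicting minimality. Your proposal contains no analogue of this lemma, and without it (or a fully specified discharging scheme that you have verified) the argument does not reach the stated bound.
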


\section{Proof of Theorem \ref{ch1:thm1}}

Before we present our proof, we need some additional notation, and we prove a key lemma. 
Let $H$ be a linear $r$-graph, let $d_1\ge d_2\ge \ldots\ge d_r$ be positive integers, and let $e\in E(H)$.
Then we use $D(e)\ge \{d_1,d_2,\ldots,d_r\}$  
to denote that 
$e$ can be written as $e=\{u_1,u_2,\ldots,u_r\}$ such that 
$d(u_i)\ge d_i$ for each 
$i\in [r]=\{1,2,\ldots,r\}$. 
Here $d(v)$ denotes the degree, i.e., the number of edges containing the vertex $v$. We use the shorthand $v$-edge for an edge containing the vertex $v$. 

\begin{lemma}\label{ch2:lem1}
Let $G$ be a $\{C_{1r},C^*\}$-free linear $r$-graph, and 
let $e\in E(G)$ be such that $D(e)\ge \{(r-1)^2+1,(r-1)^2+1,(r-1)^2,\ldots, (r-1)^2\}$. Then 
$$
S=\bigcup_{f\in E(G),f\cap e\neq \emptyset}f
$$
contains exactly $(r-1)^3+r$ vertices, and all vertices 
in $S$ have degree at most $(r-1)^2+1$. 
Moreover, 
$$
E_S=\{f:f\in E(G),f\cap S\neq \emptyset\}
$$
contains at most $r(r-1)^2+1$ edges.
\end{lemma}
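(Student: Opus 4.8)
The plan is to establish the three assertions together by bootstrapping, peeling off ever more rigid structure around $e=\{u_1,\ldots,u_r\}$, where $d(u_1),d(u_2)\ge (r-1)^2+1$ and $d(u_i)\ge (r-1)^2$ for $i\ge 3$. I will use repeatedly that, by linearity, any two edges through a common vertex $u$ meet only in $u$, so the $d(u)$ edges through $u$ cover exactly $1+(r-1)d(u)$ vertices. Write $F_i$ for the set of edges containing $u_i$, and $N_1^{+}$ for the union of all edges in $F_1$; thus $|N_1^{+}|=1+(r-1)d(u_1)$, and $N_1^{+}\subseteq S$ since every edge of $F_1$ meets $e$.

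\textbf{Degrees of the $u_i$, and the identity $S=N_1^{+}$.} First I would show $d(u_i)\le (r-1)^2+1$ for all $i$, hence $d(u_1)=d(u_2)=(r-1)^2+1$. If $d(u_j)\ge (r-1)^2+2$, then, viewing $e$ as the central edge of a prospective crown, I pick greedily, for each $i\ne j$, an edge $f_i\in F_i\setminus\{e\}$ disjoint from $e$ and from the edges already chosen; the degree bounds keep a valid choice available at every stage, and since the $r-1$ chosen edges use only $(r-1)^2$ vertices outside $e$ while $F_j$ contains more than $(r-1)^2$ pairwise $e$-disjoint edges, one more $f_j\in F_j\setminus\{e\}$ survives, so $\{e,f_1,\ldots,f_r\}$ is a copy of $C_{1r}$ --- a contradiction. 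Next I would show that every edge $g\ne e$ meeting $e$ lies in $N_1^{+}$; it suffices to treat $g\cap e=\{u_j\}$ with $j\ge 2$. If $g$ had a vertex $x\notin N_1^{+}$, I again build a crown on $e$, now forcing the $u_j$-edge to be $g$, choosing the edges at the $u_i$ with $i\ne 1,j$ greedily disjoint from $g$ and from each other, and then seeking an edge of $F_1\setminus\{e\}$ disjoint from everything chosen; the at most $(r-1)^2$ forbidden vertices include $x$, which lies in no edge of $F_1$, so at most $(r-1)^2-1$ of them are relevant and one of the $(r-1)^2$ pairwise $e$-disjoint edges of $F_1\setminus\{e\}$ survives, producing $C_{1r}$ once more. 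Hence $S=N_1^{+}$ and $|S|=1+(r-1)\big((r-1)^2+1\big)=(r-1)^3+r$. Interchanging $u_1$ and $u_2$ gives also $S=N_2^{+}$, which forces the sets of vertices (outside $e$) covered by $F_1\setminus\{e\}$ and by $F_2\setminus\{e\}$ to coincide in a common set $P'$ with $|P'|=(r-1)^3$, and, by linearity, makes each edge of $F_1\setminus\{e\}$ meet each edge of $F_2\setminus\{e\}$ in at most one vertex of $P'$: a transversal-design-like grid on $P'$.

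\textbf{Degrees inside $S$ and the bound on $E_S$.} For $w\in S$ the case $w=u_i$ is done, so suppose $w\in P'$ and $d(w)\ge (r-1)^2+2$. Let $f_w=\{u_1,w,q_1,\ldots,q_{r-2}\}\in F_1\setminus\{e\}$. Each $q_\ell\in P'$ lies on a unique edge $g_\ell\in F_2$; these $g_\ell$ are distinct and meet pairwise only in $u_2$ --- precisely the pattern a copy of $C^*$ requires. I would take $f_w$ as the connector, $g_1,\ldots,g_{r-2}$ as the $r-2$ edges through the common vertex $u_2$, and then produce two further edges, one through $u_1$ and one through $w$, disjoint from all the $g_\ell$ and from each other; the grid structure together with $d(u_1)=(r-1)^2+1$ and $d(w)\ge (r-1)^2+2$ is used to show such edges survive the relevant counts, yielding a copy of $C^*$ and a contradiction. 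Finally, $E_S$ is the union of the edges through some $u_i$ --- of which there are $\sum_i d(u_i)-(r-1)\le r\big((r-1)^2+1\big)-(r-1)=r(r-1)^2+1$ --- and the ``extra'' edges, those meeting $P'$ but no $u_i$. A counting argument bounds the number of extra edges by $\sum_i\big((r-1)^2+1-d(u_i)\big)$: when $d(u_i)=(r-1)^2+1$ the edges of $F_i\setminus\{e\}$ already tile $P'$ and leave an extra edge no room, while each unit of deficiency $(r-1)^2+1-d(u_i)$ frees only $r-1$ vertices of $P'$, enough for at most one extra edge, with $C^*$-freeness and linearity excluding the remaining possibilities. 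Summing the two estimates gives $|E_S|\le r(r-1)^2+1$.

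\textbf{The main obstacle.} The structural heart is showing $S\subseteq N_1^{+}$, which pins $S$ down exactly; but the genuinely hard work lies in the last two steps, where the surviving-edge counts are essentially tight and one must exploit the rigid grid on $P'$ together with the precise shape of $C^*$ (a plain crown no longer suffices) both to force the contradiction in the degree bound and to control the extra edges in the count. This is exactly why the theorem is stated for the pair $\{C_{1r},C^*\}$ rather than for $C_{1r}$ alone.
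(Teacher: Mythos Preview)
Your argument that $S=N_1^{+}$ and hence $|S|=(r-1)^3+r$ is correct and matches the paper's. Your direct $C^*$-construction for the degree bound on a vertex $w\in P'$ is a legitimate alternative to the paper --- which instead obtains the degree bound only as a by-product of the stronger fact that every edge meeting $S$ already meets $e$ --- and the survival counts you sketch do work out (with margin $r-2$ for the final $w$-edge).

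The genuine gap is in your treatment of $|E_S|$. The assertion that the tilings of $P'$ by the $F_i\setminus\{e\}$ ``leave an extra edge no room'' is not a proof: a tiling of $P'$ by $u_i$-edges does nothing to prevent an edge $h$ disjoint from $e$ from meeting $P'$, since linearity only forces $h$ to meet each tile in at most one vertex. To exclude such an $h$ you must exhibit a copy of $C^*$ containing it, and here the trick from your degree-bound step fails: $h$ is a single prescribed edge rather than one of $(r-1)^2+2$ many, so you cannot \emph{choose} it to avoid the fan of $u_2$-edges $g_1,\ldots,g_{r-2}$ that is rigidly determined by the connector $f_v$. The paper resolves this by first doing substantial structural work you have skipped: it builds auxiliary bipartite graphs between the $u_h$-edges and the $u_1$-edges, shows these decompose into copies of $K_{r-1,r-1}$ (or $K_{r-2,r-1}$), and deduces that $P'$ splits into $r-1$ blocks $U_1,\ldots,U_{r-1}$ with every $u_i$-edge other than $e$ lying entirely inside one block. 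Only with this block decomposition in hand can one, given an extra edge $h$ meeting some $U_s$, select the $r-2$ fan-edges and the two further disjoint edges so that all of them avoid $h$ and together with $h$ form a $C^*$; the paper thus concludes that no extra edge exists at all. This block structure is the missing idea in your outline, and without it neither your ``no room'' claim nor the deficiency count is justified.
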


\begin{proof}
Without loss of generality, 
suppose $e=\{u_1,u_2,\ldots,u_r\}$ 
with $d(u_1)\ge d(u_2)\ge (r-1)^2+1$ and $d(u_i)\ge (r-1)^2$ 
for each $3\le i\le r$.
If $d(u_1)\ge (r-1)^2+2$, we can find a copy of $C_{1r}$
 in the following way. We start with the edge $e=\{u_1,u_2,\ldots,u_r\}$.  
 We can find  
 a 
 $u_r$-edge $e_1\neq e$ since $d(u_r)\ge (r-1)^2$. 
 By considering $i$ from $r-1$ to $2$ one by one, we can find  
 a 
 $u_i$-edge $e_{r-i+1}$ that does not share a vertex with any edge in $\{e_1, e_2, \ldots, e_{r-i}\}$. 
 Finally, we can choose 
 a 
 $u_1$-edge $e_r$ that does not share a vertex with $e_1, e_2, \ldots, e_{r-1}$. 
 Hence, we have found a copy of $C_{1r}$, a contradiction. 
 
Therefore, we have $d(u_1)=d(u_2)=(r-1)^2+1$. 
For $p\in \{u_1,u_2,\ldots,u_r\}$, we use $G(p)$ to denote 
the set of all vertices outside $e$ that lie on a common edge with $p$. 
We first prove the following claim. 

\begin{claim}
$G(u_1)=G(u_2)$.
\end{claim}

\begin{proof}
Suppose to the contrary that there exists 
a $u_2$-edge $e_1\neq e$ containing some vertex in $V(G)\setminus G(u_1)$. 
Then there are at most $r-2$ $u_1$-edges other than $e$ 
intersecting 
$e_1$, so there are at least $(r-2)(r-1)+1$ $u_1$-edges 
that are 
disjoint from $e_1$. By the edge condition that $d(u_i)\ge (r-1)^2$ for each $3\le i\le r$, we can choose a $u_i$-edge $e_{i-1}$ for each $3\le i\le r$ such that $e_{i-1}$ is disjoint from $\{e_1,e_2,\ldots,e_{i-2}\}$, 
and 
then choose a $u_1$-edge $e_r$  that is disjoint from $\{e_1,e_2,\ldots,e_{r-1}\}$. So, 
in that case 
 $\{e,e_1,e_2,\ldots,e_r\}$ forms a $C_{1r}$, a contradiction.
\end{proof}

Similarly, 
we must have $G(u_i)\subset G(u_2)$ for each $3\le i\le r$. Suppose to the contrary that there exists some $3\le i\le r$ such that there is 
a 
$u_i$-edge $e_i\neq e$ 
containing 
some vertex not in $G(u_2)$. Then there are at most $r-2$ $u_2$-edges other than $e$ 
intersecting 
$e_i$, so there are at least $(r-2)(r-1)+1$ $u_2$-edges 
that 
are disjoint from $e_i$. By the edge conditions that 
$d(u_1)\ge (r-1)^2+1$ and $d(u_s)\ge (r-1)^2$ 
for each $3\le s\le r$, for each $s$ satisfying the conditions $1\le s\le r,s\neq 2$ and $s\neq i$ we can choose a $u_s$-edge $e_s$ that is disjoint from $\{e_1,e_3,\ldots,e_{s-1}\}$, 
and 
then choose a $u_2$-edge $e_2$ that is disjoint from $\{e_1,e_3,\ldots,e_r\}$. So $\{e,e_1,e_2,\ldots,e_r\}$ forms a $C_{1r}$, a contradiction.

Thus $S\setminus \{u_1,u_2,\ldots,u_r\}=G(u_2)=G(u_1)\supset G(u_i)$ for each $3\le i\le r$. Denote by $F$ the edge set 
  each edge 
  of which 
  is disjoint from $\{u_1,u_2,\ldots,u_r\}$ and 
  contains 
  at least one vertex 
  of 
  $S$. It suffices to show that $F$ must be empty.

For this purpose, we first 
construct $r-1$ auxiliary bipartite graphs as follows. Fix an $h$ with $2\le h\le r$, 
and let 
 $H_h=(V_{H_h}=X_{H_h}\cup Y_{H_h},E_{H_h})$, where $X_{H_h}=\{e_i|u_h\in e_i, e_i\neq e\}$, $Y_{H_h}=\{e_j|u_1\in e_j, e_j\neq e\}$ and $E_{H_h}=\{\{e_i,e_j\}|e_i\cap e_j\neq \emptyset\}$.
Then 
$H_2$ is an $(r-1)$-regular bipartite graph 
with partition classes of 
exactly $(r-1)^2$ vertices. For $3\le h\le r$, $H_h$ is a bipartite graph 
with one class of 
exactly $(r-1)^2$ vertices and the other 
class having 
at least $(r-1)^2-1$ vertices. 
Next, we prove two claims on the structure of these bipartite graphs.

\begin{claim}\label{claim2}
If $G$ is $C_{1r}$-free, then $H_2$ must contain a $K_{r-1,r-1}$.
\end{claim}

\begin{proof}
By the edge conditions that $d(u_h)\ge (r-1)^2-1$ for each $3\le h\le r$, we can choose $r-2$ vertex-disjoint edges $e_3,e_4,\ldots,e_r$ satisfying $e_h\neq e$ is a $u_h$-edge for each $3\le h\le r$.

Define 
$$V_1=\Bigg(\bigcup_{3\le h\le r} e_h\Bigg) \cap S,~~W_1=\{e_s|e_s\cap V_1\neq \emptyset, s=i,j\}\subset V_{H_2}.$$ 
Then we have $|V_1|=(r-2)(r-1)$.
Therefore
$$|W_1|\le 2(r-2)(r-1),~~|V(H_2)\setminus W_1|\ge 2r-2.$$ 
To 
construct a $C_{1r}$, 
it would be sufficient to have 
$e_i\in X_{H_2}, e_j\in Y_{H_2}$ such that $\{e_i,e_j\}\notin E(H_2-W_1)$. 
Hence, if there is no 
$C_{1r}$ 
in $G$, $H_2-W_1$ has to be a complete bipartite graph. Since $|V(H_2)\setminus W_1|\ge 2r-2$ 
and $H_2$ is $(r-1)$-regular, there is 
 a $K_{r-1,r-1}$ in $H_2-W_1$. Thus $H_2$ contains a $K_{r-1,r-1}$.
\end{proof}

\begin{claim}\label{claim3}
If $G$ is $C_{1r}$-free, then $H_h$ must contain a $K_{r-2,r-1}$ for each $2\le h\le r$. Furthermore, the 
partition classes on 
$r-1$ vertices in these $K_{r-2,r-1}$'s are 
mutually 
disjoint. 
\end{claim}

\begin{proof}
As for the first statement, we already proved it if $h=2$ by proving there must exist a $K_{r-1,r-1}$ in $H_2$. Next we will prove it for $3\le h\le r$.  

By Claim \ref{claim2}, we can choose a vertex $e_2\in V(K_{r-1,r-1})\subset V(H_2)$ which is also a $u_2$-edge. Since $d(u_4)\ge \ldots \ge d(u_r)\ge (r-1)^2$, we can choose $r-3$ vertex-disjoint edges $e_4,\ldots,e_r$ satisfying $e_t\neq e$ is a $u_t$-edge and $e_t$ is also disjoint  
from 
$e_2$ for each $4\le t\le r$.

Define 
$$V_2=\Bigg(e_2\cup \bigcup_{4\le t\le r} e_t\Bigg) \cap S,~~W_2=\{e_s|e_s\cap V_2\neq \emptyset, s=i,j\}\subset V_{H_3}.$$ 
Then we have $|V_2|=(r-2)(r-1)$.
Therefore
$$|W_2|\le 2(r-2)(r-1),~~|V(H_3)\setminus W_2|\ge 2r-3.$$ 
To 
construct a $C_{1r}$, 
it would be sufficient to have 
 $e_i\in X_{H_3}, e_j\in Y_{H_3}$ such that $\{e_i,e_j\}\notin E(H_3-W_2)$. 
 Hence, if there is no 
 $C_{1r}$ 
 in $G$, $H_3-W_2$ has to be a complete bipartite graph. Since $|V(H_3)\setminus W_2|\ge 2r-3$ 
 and $H_3$ has maximum degree $(r-1)$, there is 
  a $K_{r-2,r-1}$ in $H_3-W_2$. Thus $H_3$ contains a $K_{r-2,r-1}$.

Note that the $K_{r-2,r-1}$ in $H_3$ is disjoint from the $K_{r-1,r-1}$ in $H_2$. And the 
partition class on 
$r-1$ vertices in $K_{r-2,r-1}$ 
consists of 
$u_1$-edges. Through a similar process, we can find a $K_{r-2,r-1}$ in $H_h$ for $4\le h\le r$ such that all of these $K_{r-2,r-1}$'s are pairwise disjoint, all of these $K_{r-2,r-1}$'s are disjoint 
from 
 the $K_{r-1,r-1}$ in $H_2$, and the 
partition class   
on 
$r-1$ vertices in the $K_{r-2,r-1}$ 
consists 
of $u_1$-edges for each $H_h$.
\end{proof}

Let $\{e_1,e_2,\ldots,e_{(r-1)^2}\}$ denote the ordered sequence of all $u_1$-edges except for $e$. 
Without loss of generality, we assume that $H_h$ contains the $(h-1)$-th $r-1$ $u_1$-edges 
of this sequence 
for $2\le h\le r$. That means $H_h$ contains $e_{(h-2)(r-1)+1},e_{(h-2)(r-1)+2},\ldots, e_{(h-1)(r-1)}$ for each $2\le h\le r$. Denote by $U_{h-1}$ 
the set of vertices in the $(h-1)$-th $r-1$ $u_1$-edges  
of the sequence 
for $2\le h\le r$. 
We prove another claim. 

\begin{claim}
Fix $2\le i\le r$. Each $u_i$-edge 
contains only vertices of one vertex set from $\{U_1,U_2,\ldots,U_{r-1}\}$. 
\end{claim}

\begin{proof}
By Claim~\ref{claim3} and 
the above analysis, there must be $r-1$ $u_2$-edges whose vertices except for $u_2$ are in $U_1$, 
and at least $r-2$ $u_h$-edges whose vertices except for $u_h$ are in $U_{h-1}$ for $3\le h\le r$. Suppose to the contrary that for some $2\le h\le r$, there exists 
a $u_h$-edge $f$ such that $1\le |f\cap U_i|\le r-2$, $1\le |f\cap U_j|\le r-2$, and $1\le i\neq j\le r-1$. 
We first deal with the case 
that $|f\cap U_i|= r-2$. 
Then 
$|f\cap U_j|=1$. Let 
$\{v\}=f\cap U_j$. 
If $f$ intersects each $u_{i+1}$-edge in exactly one vertex among $U_i$, then we can find a $C^*$ in $G$ as follows. 
At first, we can  
choose 
$r-2$ $u_{i+1}$-edges $\{f_1,f_2,\ldots,f_{r-2}\}$ whose vertices except for $u_{i+1}$ are in $U_i$. Then we choose one $u_h$-edge $f'\neq f$ whose vertices except for $u_h$ are in $U_{h-1}$. 
Denote by $e_1$ the $u_1$-edge containing the vertex $v$. Then the edges $f,e_1,f',f_1,f_2,\ldots,f_{r-2}$ form a $C^*$, a contradiction. 
If among $U_i$ there exists  
a 
$u_{i+1}$-edge $f_1$ which is disjoint from $f$, 
then we can find a $C_{1r}$ in $G$ as follows. We can find a $u_s$-edge whose vertices except for $u_s$ are in $U_{s-1}$, 
for all $s$ with $2\le s\le r$ and $s\neq i+1,h$. 
Then we choose one $u_1$-edge $f_2$ whose vertices except for $u_1$ are 
in $U_{h-1}$. Then the edges $f,f_1,f_2$ 
together with the 
$r-2$ 
$u_s$-edges
form a $C_{1r}$, a contradiction. 

The remaining case is $1\le |f\cap U_i|\le r-3$ and $1\le |f\cap U_j|\le r-3$.
We can  
find a $u_s$-edge whose vertices except for $u_s$ are in $U_{s-1}$ 
for all $s$ with $2\le s\le r$ and $s\neq h$. 
Then we have $r-2$ disjoint edges $f_3,\ldots,f_r$ which are disjoint from $f$. We choose one $u_1$-edge 
$f_1$ 
whose vertices
except for $u_1$ are in $U_{h-1}$. All these edges $e,f,f_1,f_3,f_4,\ldots,f_r$ form a $C_{1r}$, a contradiction. 
\end{proof}

Before we continue with the proof of Lemma~\ref{ch2:lem1}, we note that the above analysis implies the following about the structure of $H_i$. 

\begin{remarks}\label{rm1}
$H_2$ is the disjoint union of $r-1$ complete bipartite graphs $K_{r-1,r-1}$.
Since $d(u_h)\ge (r-1)^2$ for each $3\le h\le r$, $H_h$ is either the disjoint union of $r-1$ complete bipartite graphs $K_{r-1,r-1}$ or the disjoint union of $r-2$ complete bipartite graphs $K_{r-1,r-1}$ and one complete bipartite graph $K_{r-2,r-1}$.
\end{remarks}

As a consequence of Remarks~\ref{rm1}, 
for each $1\le i\le r-1$ there exist $r-1$ $u_2$-edges whose vertices except 
for 
$u_2$ are in $U_i$. Fix $h$ with $3\le h\le r$. There 
exists 
at most one $s$ with $1\le s\le r-1$ such that there exist $r-2$ $u_h$-edges whose vertices except 
for 
 $u_h$ are in $U_s$. For each $1\le i\neq s\le r-1$, there exist $r-1$ $u_h$-edges whose vertices except 
 for 
 $u_h$ are in $U_i$. 

Now 
we are ready to 
prove 
 the statement about $F$. If $F$ is not an empty set, we let $f$  
 be 
 an edge of $F$. 
There must exist an $s$ with $1\le s\le r-1$ such that $|f\cap U_s|\ge 1$. Let $v\in f\cap U_s$. We choose a $u_1$-edge $g$ containing $v$. By 
Remarks~\ref{rm1}, there exist  
$r-2$ $u_t$-edges $g_1,g_2,\ldots,g_{r-2}$ 
with the property that 
each of them is disjoint from $f$ and each of them intersects  
$g$. And 
there 
must exist 
another $u_1$-edge $g'$ whose vertices except for $u_1$ are in $U_t$ for some $1\le t\neq s\le r-1$ such that $g'$ is disjoint from $f$.
Now the 
edges $f,g,g',g_1,g_2,\ldots,g_{r-2}$ constitute a $C^*$, a contradiction.  
This completes the proof of Lemma~\ref{ch2:lem1}. 
\end{proof}

Now we are ready to prove Theorem~\ref{ch1:thm1}. Suppose to the contrary that $G$ is 
a 
smallest  
(in terms of the number of vertices $n$) 
$\{C_{1r},C^*\}$-free linear $r$-graph such that $G$ has  
more 
than $\frac{r(r-2)(n-s)}{r-1}$ edges. For each $v\in V(G)$, we define $I(v)=1$ if $d(v)\le (r-1)^2+1$, and $I(v)=0$ otherwise.

We   
adopt the following useful observation from~\cite{TWZZ22}. 
$$
\sum_{e\in E(G)}\sum_{v\in V(G),v\in e}\frac{I(v)}{d(v)}=\sum_{v\in V(G)}\sum_{e\in E(G),v\in e}\frac{I(v)}{d(v)}=\sum_{v\in V(G)}I(v)=n-s.
$$
Since $|E(G)|>\frac{r(r-2)(n-s)}{r-1}$, there must exist 
an edge $e=\{u_1,u_2,\ldots, u_r\}$ 
such that 
\begin{equation}\label{eq1}
\sum_{1\le i\le r}\frac{I(u_i)}{d(u_i)}<\frac{r-1}{r(r-2)}=\frac{r-1}{(r-1)^2-1}.
\end{equation}
Without loss of generality, we assume $d(u_1)\ge d(u_2)\ge \ldots\ge d(u_r)$.
Note that $d(u_r)\ge r-1$ and $d(u_2)\ge (r-1)^2$, as otherwise (\ref{eq1}) would be violated. 
We can also deduce that $d(u_i)\ge (r-i)(r-1)+2$ for all $3\le i\le r-1$, as otherwise (\ref{eq1}) would be violated.
If $d(u_1)\ge (r-1)^2+2$, then we can easily find a $C_{1r}$ in the following way. We start with the edge $e=(u_1,u_2,\ldots,u_r)$. We can find  
a 
$u_r$-edge $e_1\neq e$ since $d(u_r)\ge 2$. By considering $i$ from $r-1$ to $2$ one by one, we can find 
a 
$u_i$-edge $e_{r-i+1}$ that does not share a vertex with any edge in $\{e_1, e_2, \ldots, e_{r-i}\}$. Finally, we can choose 
a 
 $u_1$-edge $e_r$ that does not share a vertex with $\{e_1, e_2, \ldots, e_{r-1}\}$, a contradiction. Therefore, we have $d(u_1)\le (r-1)^2+1$. By (\ref{eq1}), we have $d(u_1)=d(u_2)=(r-1)^2+1$ and $d(u_i)\ge (r-1)^2$ for each $3\le i\le r$.
Thus, $D(e)\ge \{(r-1)^2+1,(r-1)^2+1,(r-1)^2,\ldots, (r-1)^2\}$.

Now we define $S$ and $E_S$ as in Lemma~\ref{ch2:lem1}. 
Let $G-S$ be the linear $r$-graph obtained by deleting the vertices  
of 
$S$ and the edges 
of 
$E_S$. By Lemma~\ref{ch2:lem1}, 
 $G-S$ has $n'=n-((r-1)^3+r)$ vertices and at least $|E(G)|-(r(r-1)^2+1)$ edges. Furthermore, the number of vertices in $G-S$ of degree at least $(r-1)^2+2$ is exactly $s$. Therefore, we have
$$
|E(G-S)|\ge |E(G)|-(r(r-1)^2+1)>\frac{r(r-2)(n-s)}{r-1}-(r(r-1)^2+1)>\frac{r(r-2)(n'-s)}{r-1},
$$
which 
contradicts 
 the assumption that $G$ is 
 a 
 smallest counterexample to Theorem~\ref{ch1:thm1}. 

This completes the proof. 

\section{Concluding remarks}
In this paper, we   
have generalized 
the notion of 
the crown $C_{13}$ (also known as $E_4$) 
in linear 
3-graphs 
to linear 
$r$-graphs. 
For this purpose we have introduced two linear $r$-graphs $C_{1r}$ and $C^*$, which are both isomorphic to $C_{13}$ for $r=3$. 
We have obtained 
a lower bound on the linear Tur\'{a}n number of $\{C_{1r},C^*\}$ when $r$ is a prime power,  
and an upper bound on the linear Tur\'{a}n number of $\{C_{1r},C^*\}$. 

Similar to the consideration in \cite{TWZZ22}, we are  
inclined to believe that the lower bound on the linear Tur\'{a}n number of $\{C_{1r},C^*\}$ 
which we have obtained 
is 
essentially tight.  
Whereas the newly introduced $C_{1r}$ is a rather natural generalization of the crown $C_{13}$, the other introduced generalization $C^*$ seems somewhat artificial. However, we saw no way to avoid it in our attempts to generalize Theorem~\ref{ch1:TWZZ} to linear $r$-graphs. This leaves us with the following question. 
What is the linear Tur\'{a}n number of $C_{1r}$?  
Clearly, 
the lower bound on the linear Tur\'{a}n number of $\{C_{1r},C^*\}$ 
 we  
 have obtained 
 is 
 also 
 a lower bound on the linear Tur\'{a}n number of $C_{1r}$. 
 Is this lower bound essentially tight? 
 We leave it as a big challenge to obtain a good  
upper bound on the linear Tur\'{a}n number of $C_{1r}$.

\end{document}